\def\ff{{\mathcal F}}
\def\ffi{\varphi}
\def\eps{\varepsilon}
\DeclareMathOperator{\supp}{supp}
\def\C{{\mathbb{C}}}
\def\R{{\mathbb{R}}}
\newtheorem{theorem}{Theorem}[section]
\newtheorem{lemma}[theorem]{Lemma}
\newtheorem{question}[theorem]{Question}
\newtheorem{proposition}[theorem]{Proposition}
\newtheorem{corollary}[theorem]{Corollary}
\theoremstyle{definition}
\newtheorem{definition}{Definition}
\newtheorem{notation}[theorem]{Notation}
\begin{document}

\title{Sign retrieval in spaces of variable bandwidth}
\author{Philippe Jaming$^1$ \& Rolando Perez III$^{2}$\\
{\small $^1$Univ. Bordeaux, CNRS, Bordeaux INP, IMB, UMR 5251,  F-33400, Talence, France}\\
{\small $^2$Institute of Mathematics, University of the Philippines Diliman, 1101 Quezon City, Philippines}}
\maketitle

\begin{abstract}
The aim of this paper is to get a deeper understanding of the
spaces of variable bandwidth introduced by Gr\"ochenig and Klotz
({\it What is variable bandwidth?} Comm. Pure Appl. Math., {\bf 70}  (2017), 2039--2083).
In particular, we show that when the variation of the bandwidth is modeled
by a step function with a finite number of jumps, then, the sign retrieval
principle applies.
\end{abstract}

\begin{IEEEkeywords} Spaces of variable bandwidth, sign retrieval, phase retrieval
\end{IEEEkeywords} 

\section{Introduction}
The phase retrieval problem is an ubiquitous family of problems in the applied sciences
when one wants to reconstruct a signal $f$ from its modulus $|f|$ and some a priori
knowledge on $f$ that is usually expressed through the fact that $f$
belongs to some function space $\ff$. In other words, we are asking whether $f,g\in\ff$ with
$|f|=|g|$ implies $f=cg$ where $c$ is a global phase factor, that is a complex number of modulus 1. When one further restricts the signals $f,g$ to be real valued, then 
we ask whether $f=g$ or $f=-g$, {\it i.e.}, $f$ and $g$ are the same up to a global sign.

Our general aim is to investigate which properties of the function space lead to the
sign retrieval property. To explain the general aim, let us focus first on the Paley-Wiener
spaces. Fix $c>0$ and recall that the classical Paley-Wiener space $PW_c(\R)$ is the set of all $L^2$ functions whose Fourier transforms are supported on the interval $[-c,c]$, that is,
$$
PW_c(\R)=\{f\in L^2(\R)\,:\ \supp\widehat f\subset [-c,c]\}.
$$
Here, we use the normalized definition of the Fourier transform given by
$$
\widehat f(\xi)=\mathcal Ff(\xi)=\dfrac{1}{\sqrt{2\pi}}\int_\R f(x)e^{-ix\xi}\,\textnormal{d}x.
$$
Functions in $PW_c(\R)$ are called band-limited functions and $c$ is the bandwidth.
The sign-retrieval problem is easy to solve in this space\,:

\begin{lemma}[Paley-Wiener sign retrieval]\label{lem:srpw}
Let $I\subset\R$ be an interval, and let $f,g\in PW_c(\R)$ are real valued (on $\R$)
and such that $|f(x)|=|g(x)|$ for $x\in I$. Then either $f=g$ or $f=-g$.
\end{lemma}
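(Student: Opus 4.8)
The plan is to exploit the fact that band-limited functions are restrictions to $\R$ of entire functions, so that the information $|f|=|g|$ on the interval $I$ propagates to all of $\R$ by analytic continuation. First I would recall, via the Paley-Wiener theorem, that every $h\in PW_c(\R)$ extends to an entire function $h(z)=\frac{1}{\sqrt{2\pi}}\int_{-c}^{c}\widehat h(\xi)e^{iz\xi}\,\mathrm{d}\xi$ of exponential type $c$; in particular $f$ and $g$ are real-analytic on $\R$. Since $f,g$ are real valued, the hypothesis $|f(x)|=|g(x)|$ for $x\in I$ is equivalent to $f(x)^2=g(x)^2$, i.e. the entire function $h:=f^2-g^2=(f-g)(f+g)$ vanishes on $I$.

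Next I would invoke the identity theorem. Taking $I$ to be a nondegenerate interval, it has an accumulation point, so an entire function vanishing on $I$ must vanish identically; hence $h\equiv 0$ on $\R$ (indeed on all of $\C$). This already upgrades the hypothesis to $|f(x)|=|g(x)|$ for \emph{every} $x\in\R$, which is the key gain: the modulus equality now holds globally rather than only on $I$.

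Finally I would separate the two factors. The ring of entire functions is an integral domain, so $(f-g)(f+g)\equiv 0$ forces $f-g\equiv 0$ or $f+g\equiv 0$. Concretely, if neither factor vanished identically, then each of $f-g$ and $f+g$ would have only isolated zeros, hence a countable zero set; but the union of these two zero sets would have to cover all of $\R$, which is uncountable, a contradiction. Therefore $f=g$ or $f=-g$ on $\R$, as claimed.

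The argument is short, and I expect the only genuine point requiring care to be the transfer of analyticity: one must justify that membership in $PW_c(\R)$ yields an entire (equivalently, real-analytic) extension, and that the identity theorem legitimately applies to the zero set $I$. The nondegeneracy of $I$ is essential here, since the conclusion plainly fails if $I$ reduces to a single point.
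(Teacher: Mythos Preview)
Your proof is correct and follows essentially the same route as the paper: both arguments rest on the fact that functions in $PW_c(\R)$ extend to entire functions and then invoke the identity theorem. The only cosmetic difference is in the last step---the paper partitions $I$ into $E=\{f=g\}$ and $I\setminus E=\{f=-g\}$ and applies the identity theorem to whichever set has an accumulation point, whereas you factor $f^2-g^2=(f-g)(f+g)$ and use that the ring of entire functions is an integral domain; these are equivalent ways of packaging the same analytic-continuation argument.
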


Indeed, it is known that each function in $PW_c(\R)$ extends to an entire function
and this is the only property needed\,: If $f,g$ are real valued on $\R$ and satisfy 
$|f(x)|=|g(x)|$ for $x\in I$
then there is a set $E\subset I$
such that $f=g$ on $E$ and $f=-g$ on $I\setminus E$. Now at least one of 
$E$ or $I\setminus E$ has an accumulation point, say $E$.
As $f$ and $g$ are entire functions, $f=g$ on $E$ implies $f=g$ on $\C$.

In particular, the sign retrieval is valid in any space of entire functions
(or directly related to such a space) like {\it e.g.} de Branges spaces,
or time-warping variable bandwidth spaces, {\it i.e.}, spaces of functions of the form
$f\circ \gamma$ where $f\in PW_c(\R)$ and $\gamma$ is a strictly increasing homeomorphism of $\R$
as introduced in \cite{Ho} ({\it see} also \cite{GK2017} and further references therein).

Further, sampling theorems are a key feature of the Paley-Wiener spaces and
the next natural question is of course to know for which (discrete) sets $\Lambda\subset\R$,
$|f(x)|=|g(x)|$ for $x\in \Lambda$ implies that $f=g$ or $f=-g$. This has been first investigated
by Thakur in \cite{Ta}, further investigated in shift invariant spaces \cite{Gr,Ro}.
We also refer to \cite{GLR} for deep results in this direction for the
Fock space. 

Our general aim is to be able to extend such results to spaces of variable bandwidth.
This is a natural concept in the physical sciences for which there is at this stage
no clear mathematical formulation. The paper \cite{GK2017} by Gr\"ochenig and Klotz
offers an overview of various attempts as well as a first definition of a new family
of spaces of variable bandwidth. For those spaces, the bandwidth is described by two parameters,
a global bandwidth $\Omega$ and a local variation $p(x)$ so that, at some point $x$,
the bandwidth is $c(x)=\Omega/\sqrt{p(x)}$. 
We will denote those spaces as $PW_\Omega(A_p)$.\footnote{We have decided to adopt a slightly different normalization, this space would be $PW_{\Omega^2}(A_p)$ in \cite{GK2017}.}
This note stems from our will to better understand
those spaces and to see whether they are adapted to phase retrieval or at least
to sign retrieval, if possible in its sampled form. We conjecture
that this is possible, at least when the local variation $p(x)$ is a step function
with finitely many jumps. The reason we think this is possible are the following\,:

-- $PW_\Omega(A_p)$ is a reproducing kernel Hilbert space consisting of
continuous functions. Moreover, on an interval on which $p$ is constant, the functions
in $PW_\Omega(A_p)$ are restrictions of functions in $PW_c(\mathbb R)$ to this interval.

-- There are good sampling formulas in $PW_\Omega(A_p)$.

-- Sign retrieval is possible in $PW_\Omega(A_p)$. This result is new and is the main result of this note. We offer two proofs, one in the so-called toy model when $p$ is a step function with a single jump and that is based on the reproducing kernel. The second one is for general step functions.

The remaining of the paper is organized as follows\,: The next section is devoted to
recalling the main facts from \cite{GK2017}, reformulating the definition
of $PW_\Omega(A_p)$ when $p$ is a step function with finitely many jumps and then proving
the sign retrieval property. The last section is devoted to giving a second proof of this 
property in the case of a single jump, based this time on the reproducing kernel.

\section{Functions of Variable Bandwidth}
The starting point of Gr\"ochenig and Klotz \cite{GK2017} is as follows\,:
Band-limited functions are contained in a spectral subspace of the differential operator $-D^2=-\frac{d^2}{dx^2}$ since this operator is diagonalized by the Fourier transform $\mathcal F$ with $-\mathcal F D^2\mathcal F^{-1}f(\xi)=\xi^2\widehat f(\xi)$. The idea in \cite{GK2017}
is to replace $-D^2$ by the Sturm-Liouville operator $\tau_p$ given by
$$
\tau_p f(x)=-(pf')'(x),\qquad x\in\R
$$
where $p>0$ is a bandwidth-parametrizing function. 
For almost every $x\in\R$ and $z\in\C$, the Sturm-Liouville equation corresponding to $\tau_p$ is given by $(\tau_p-z)f=0$ on $\R$.
%

We will here focus on a particular case that was investigated in more depth
by Celiz, Gr\"ochenig and Klotz \cite{CGK}\,:
\begin{notation}
We will say that $p\in PC_N$ if $p$ is a step function with $N$ jumps.
We will further write $x_0=-\infty<x_1<\cdots<x_N<x_{N+1}=+\infty$,
($x_1,\ldots,x_N$ are the jump points), $I_j:=(x_j,x_{j+1})$, $j=0,\ldots,N$,
$p_0,\ldots,p_{N}>0$ and on each $I_j$, $p(x)=p_j$.
We will also write $q_j=p_j^{-1/2}$.
\end{notation}
 
 To a given $\tau_p$ corresponds the maximal operator $(A_p,\mathcal D(A_p))$ given by
 \begin{eqnarray*}
 \mathcal D(A_p)&=&\{f\in L^2(\R)\:\ f,pf'\in AC_{loc}(\R),~\tau_p\in L^2(\R)\}\\
 A_pf&=&\tau_p f,~f\in \mathcal D(A_p).
\end{eqnarray*}
where $AC_{loc}(\R)$ is the space of functions that locally are absolutely continuous (integrals of their distributional derivative).
 

Now, for a Sturm-Liouville operator $\tau_p$ associated to $p\in PC_N$, a solution $\phi_z$ of $(\tau_p-z)f=0$ lies left in $L^2(\R)$ if $\phi_z\in L^2(I_0)$, and lies right if $\phi_z\in L^2(I_{N})$. Then, for every $z\notin\R$ there are two unique solutions of $(\tau_p-z)f=0$ up to a multiplicative constant, one of which lies left and one of which lies right in $L^2(\R)$ and the corresponding maximal operator $A_p$ is self-adjoint. We will need the following\,:

\begin{theorem}[\cite{GK2017}, Theorem 2.3]
	Let $p\in PC_N$ and $A_p$ be the self-adjoint realization of $\tau_p$. 
	If $\Phi(\lambda,x)=(\phi_+(\lambda,x),\phi_-(\lambda,x))$, for $\lambda,x\in\R$, is a fundamental system of solutions of $(\tau_p-\lambda)\phi=0$ that continuously depends on $\lambda$, then there exists a $2\times2$ matrix measure $\mu$ such that the operator
	$\mathcal F_{A_p}\,:\ L^2(\R)\longrightarrow L^2(\R,\textnormal{d}\mu)$
	\begin{equation*}
		\label{eq:spectralFT}
		 \mathcal F_{A_p}f(\lambda)=\int_\R f(x)\overline{\Phi(\lambda,x)}\,\textnormal{d}x
	\end{equation*}
	is unitary and diagonalizes $A_p$, i.e.,
	$$
	\mathcal F_{A_p} A_p \mathcal{F}_{A_p}^{-1} G(\lambda)=\lambda G(\lambda),\qquad G\in L^2(\R,\textnormal{d}\mu).
	$$
	Then $\mathcal F_{A_p}$ is called the spectral (Fourier) transform of $A_p$
	and its inverse is given as
	$$
	\mathcal F_{A_p}^{-1} G(x)=\int_\R G(\lambda)\cdot \Phi(\lambda,x)\,\textnormal{d}\mu(\lambda),\qquad G\in L^2(\R,\textnormal{d}\mu).
	$$
\end{theorem}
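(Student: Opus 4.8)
The statement is the Weyl--Titchmarsh--Kodaira eigenfunction expansion for the self-adjoint operator $A_p$, so the plan is to realize its abstract spectral resolution concretely through the two-dimensional solution space of $(\tau_p-\lambda)\phi=0$. Since $A_p$ is self-adjoint, the spectral theorem furnishes a projection-valued measure $E$ with $A_p=\int_\R\lambda\,\d E(\lambda)$, and the entire task is to express $\d E$ in terms of $\Phi(\lambda,\cdot)$. First I would record that because $p\equiv p_0$ on $I_0$ and $p\equiv p_N$ on $I_N$, near each end the equation reads $-p_0f''=zf$ (resp. $-p_Nf''=zf$), whose solutions $e^{\pm i\sqrt{z/p_0}\,x}$ show that both endpoints $\pm\infty$ are in the limit-point case for every $z\notin\R$. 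Hence there are solutions $\psi_-(z,\cdot)$ and $\psi_+(z,\cdot)$, each unique up to a scalar, that are square-integrable at $-\infty$ and $+\infty$ respectively, that is, the ``lies left'' and ``lies right'' solutions already mentioned in the excerpt.

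Next I would build the resolvent as an integral operator. With $W(z)$ the ($x$-independent, $p$-weighted) Wronskian of $\psi_-$ and $\psi_+$, the kernel
$$
G(x,y;z)=\frac{1}{W(z)}\,\psi_-(z,x\wedge y)\,\psi_+(z,x\vee y)
$$
represents $(A_p-z)^{-1}$. Expanding $\psi_\pm$ in the fundamental basis $\Phi=(\phi_+,\phi_-)$ turns $G$ into a bilinear expression in $\Phi(z,x)$ and $\Phi(z,y)$ whose coefficients form a $2\times2$ matrix $M(z)$ built from the two Weyl $m$-functions. The key step is then Stone's formula: for $a<b$,
$$
\tfrac12\bigl(E([a,b])+E((a,b))\bigr)=\lim_{\eps\to0^+}\frac{1}{\pi}\int_a^b\im\,(A_p-\lambda-i\eps)^{-1}\,\d\lambda,
$$
which identifies the matrix measure as $\d\mu(\lambda)=\frac1\pi\lim_{\eps\to0^+}\im M(\lambda+i\eps)\,\d\lambda$, its positivity being exactly the Herglotz (Nevanlinna) property of $M$. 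This yields the Parseval identity $\norm{f}_{L^2(\R)}^2=\int_\R\scal{\d\mu(\lambda)\,\mathcal F_{A_p}f(\lambda),\mathcal F_{A_p}f(\lambda)}$, first for $f$ smooth of compact support, where the transform integral converges absolutely.

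With Parseval in hand, unitarity follows by extending $\mathcal F_{A_p}$ from the dense subspace of compactly supported smooth functions to all of $L^2(\R)$ by continuity, and surjectivity onto $L^2(\R,\d\mu)$ is obtained by a standard range-density and duality argument using the resolvent representation. Diagonalization is the easy part: for $f\in\mathcal D(A_p)$ of compact support, two integrations by parts (with no boundary contributions at the jumps $x_1,\dots,x_N$, since $f,pf'$ and $\Phi,p\Phi'$ are continuous there) give
$$
\mathcal F_{A_p}(A_pf)(\lambda)=\int_\R\tau_pf(x)\,\overline{\Phi(\lambda,x)}\,\d x=\int_\R f(x)\,\overline{\tau_p\Phi(\lambda,x)}\,\d x=\lambda\,\mathcal F_{A_p}f(\lambda),
$$
using $\tau_p\Phi=\lambda\Phi$ and $\lambda\in\R$; the inversion formula is then the adjoint relation.

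I expect the genuine obstacle to be the middle step: justifying the boundary limit $\eps\to0^+$ in Stone's formula and verifying that $\im M(\lambda+i\eps)$ converges to a bona fide positive $2\times2$ matrix measure, that is, the full Herglotz matrix analysis. This is precisely where the hypothesis $p\in PC_N$ pays off: on each $I_j$ the solutions are the explicit sinusoids $\cos(q_j\sqrt\lambda\,x)$ and $\sin(q_j\sqrt\lambda\,x)$, so the $\psi_\pm$ and the Wronskian $W(z)$ are computed by composing finitely many transfer matrices across the jumps (imposing continuity of $f$ and of $pf'$), and one reads off directly that the spectrum is purely absolutely continuous on $(0,\infty)$ with a smooth density. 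Consequently the limit and the positivity can be checked by hand rather than invoking the general singular Sturm--Liouville theory.
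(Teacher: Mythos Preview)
Your outline is the standard Weyl--Titchmarsh--Kodaira argument and is essentially correct: limit-point classification at $\pm\infty$ from the explicit exponentials on $I_0$ and $I_N$, the Green's function built from $\psi_\pm$, Stone's formula to extract $\d\mu$ as the boundary value of the matrix Herglotz function, and then density/duality for unitarity plus integration by parts for diagonalization. The observation that $p\in PC_N$ makes the transfer-matrix computation and the $\eps\to0^+$ limit explicit is also to the point.

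That said, the paper does \emph{not} prove this theorem. It is quoted verbatim from \cite{GK2017} (their Theorem~2.3) as background, with no argument supplied here; the authors simply import the spectral transform and then use the explicit form of $\Phi_\pm$, the recursion for $A_j^\pm$, and the spectral density on the next page. So there is nothing to compare against: you have written a self-contained sketch of a result that the present paper treats as a black box. If anything, your sketch is closer to what one would find in the original Gr\"ochenig--Klotz paper or in a standard spectral theory reference (Weidmann, Teschl) than to anything in this note.
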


Now, when $p\in PC_N$, this can be made (almost) explicit.
A fundamental system of solutions is given by
$\Phi_\pm(z,x)=\tilde \Phi_\pm(\sqrt{z},x)$ where by convention,
when $z=re^{i\theta}$, $r>0$ and $-\pi<\theta<\pi$, $\sqrt{z}=\sqrt{r}e^{i\theta/2}$,
and, for $x\in I_j$,
\begin{equation}
\label{eq:expphi}
\tilde\Phi_\pm(\zeta,x)=a_j^\pm(\zeta)e^{iq_j\zeta x}+b_j^\pm(\zeta)e^{-iq_j\zeta x},
\end{equation}
and the coefficients $A_j^\pm:=\begin{pmatrix}a_j^\pm\\ b_j^\pm\end{pmatrix}$ are given by the induction formula $A_0^-=\begin{pmatrix}0\\ 1\end{pmatrix}$, $A_N^+=\begin{pmatrix}1\\ 0\end{pmatrix}$ and $A_j^\pm=\dfrac{1}{2}L_j(\zeta)A_{j-1}^\pm$
where $L_j(\zeta)$ is the invertible matrix
$$
\begin{pmatrix}
\left(1+\frac{q_j}{q_{j-1}}\right)e^{ix_j(q_j-q_{j-1})\zeta}&
\left(1-\frac{q_j}{q_{j-1}}\right)e^{-ix_j(q_j+q_{j-1})\zeta}\\
\left(1-\frac{q_j}{q_{j-1}}\right)e^{ix_j(q_j+q_{j-1})\zeta}&
\left(1+\frac{q_j}{q_{j-1}}\right)e^{-ix_j(q_j-q_{j-1})\zeta}
\end{pmatrix}.
$$
It is then easy to see that $a_j^\pm,b_j^\pm$ are almost periodic trigonometric polynomials,
in particular, they are bounded. 
An other key fact for us is that 
\begin{equation}
\label{eq:det}
\det\begin{pmatrix}a_j^+(\zeta)&a_j^-(\zeta)\\ b_j^+(\zeta)&b_j^-(\zeta)\end{pmatrix}\not=0.
\end{equation}
The spectral measure is also explicitly given by
$$
\mbox{d}\mu(\lambda)=\frac{1}{4\pi p_n}\begin{pmatrix}\dfrac{1}{q_0}&0\\ 0&\dfrac{1}{q_n}\end{pmatrix}\dfrac{\mbox{d}\lambda}{|b_n^-(\sqrt{\lambda})|^2\sqrt{\lambda}}.
$$

We can now define the following spectral projections\,: For $\Lambda$ a Borel subset of $\R^+$ and $f\in L^2$,
\begin{eqnarray*}
\chi_{\Lambda} (A_p)f(x)
&=&\int_\Lambda \mathcal F(A_p)f(\lambda)\cdot \Phi(\lambda,x)\,\mbox{d}\mu(\lambda)\\
&=&\mathcal F_{A_p}^{-1}(\chi_{\Lambda}\mathcal F_{A_p}f)(x),\qquad x\in\R.
\end{eqnarray*}


We now define the Paley-Wiener space of variable bandwidth functions\,:

\begin{definition}[\cite{GK2017}]
	Let $p\in PC_N$ and $A_p$ be the self-adjoint realization of $\tau_p$.
	For $\Omega>0$, the Paley-Wiener space of variable bandwidth functions, denoted by $PW_\Omega(A_p)$, is the range of the spectral projection $\chi_{[0,\sqrt{\Omega}]} (A_p)$, i.e.,
	\begin{eqnarray*}
	PW_\Omega (A_p)&=&\chi_{[0,\sqrt{\Omega}]} (A_p)(L^2(\R))\\
	&=&\{f\in L^2(\R)\,:\ f=\chi_{[0,\sqrt{\Omega}]} (A_p)f\}.
	\end{eqnarray*}
\end{definition}

Note that when $p\equiv1$, $PW_\Omega (A_p)=PW_{\Omega}(\R)$.
We also recall a characterization for the spaces $PW_\Omega(A_p)$ akin to the one from classical Paley-Wiener spaces\,:

\begin{theorem}[\cite{GK2017}, Proposition 3.2]
Let $p\in PC_N$, $A_p$ be the self-adjoint realization of $\tau_p$ and $\Omega>0$.
Let $\mu$ be the spectral measure of $A_p$ with corresponding spectral transform $\mathcal F_{A_p}$. Then the following are equivalent\,:
	\begin{enumerate}
		\item $f\in PW_\Omega(A_p)$,
		\item $\supp \mathcal F_{A_p}f\subseteq [0,\sqrt{\Omega}]$,
		\item there exists a function $F\in L^2([0,\sqrt{\Omega}],\textnormal{d}\mu)$ such that for almost every $x\in\R$,
		\begin{equation}
		\label{eq:PWexp}
		f(x)=\int_0^{\sqrt{\Omega}} F(\lambda)\cdot\Phi(\lambda,x)\,d\mu(\lambda).
		\end{equation}
More explicitly, there exist $F_+,F_-$ such that 
\begin{align*}
f(x)=& \int_0^{\sqrt{\Omega}}
\left(\frac{F_+(\lambda)\tilde\phi_+(\sqrt{\lambda},x)}{q_0}+
		\frac{F_-(\lambda)\tilde\phi_-(\sqrt{\lambda},x)}{q_n}\right)\notag\\
		&\qquad\frac{\textnormal{d}\lambda}{4\pi p_n|b_n^-(\sqrt{\lambda})|^2\sqrt{\lambda}}.\notag\\
\end{align*}
	\end{enumerate}
	\end{theorem}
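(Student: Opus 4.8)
The plan is to derive all three equivalences directly from the spectral framework recalled above, namely that the spectral transform $\mathcal F_{A_p}:L^2(\R)\to L^2(\R,d\mu)$ is unitary and diagonalizes $A_p$. The backbone of the argument is the functional-calculus identity transported through this unitary equivalence: since $\mathcal F_{A_p}$ turns $A_p$ into multiplication by $\lambda$, it turns the spectral projection $\chi_{[0,\sqrt{\Omega}]}(A_p)$ into multiplication by the indicator $\chi_{[0,\sqrt{\Omega}]}$ on the spectral side, i.e. $\mathcal F_{A_p}\,\chi_{[0,\sqrt{\Omega}]}(A_p)\,\mathcal F_{A_p}^{-1}=\chi_{[0,\sqrt{\Omega}]}\cdot$. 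With this in hand the two equivalences become formal.

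For $(1)\Leftrightarrow(2)$, I would use that, by definition, $f\in PW_\Omega(A_p)$ means $f=\chi_{[0,\sqrt{\Omega}]}(A_p)f$. Applying the unitary $\mathcal F_{A_p}$ and the intertwining identity above, this is equivalent to $\mathcal F_{A_p}f=\chi_{[0,\sqrt{\Omega}]}\,\mathcal F_{A_p}f$, which holds exactly when $\mathcal F_{A_p}f$ vanishes $\mu$-a.e. off $[0,\sqrt{\Omega}]$, that is $\supp\mathcal F_{A_p}f\subseteq[0,\sqrt{\Omega}]$.

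For $(2)\Leftrightarrow(3)$, assuming $(2)$ I would set $F:=\mathcal F_{A_p}f$, which then lies in $L^2([0,\sqrt{\Omega}],d\mu)$, and invoke the inversion formula $f(x)=\int_\R F(\lambda)\cdot\Phi(\lambda,x)\,d\mu(\lambda)$; since $F$ is supported in $[0,\sqrt{\Omega}]$ the integral collapses to (\ref{eq:PWexp}). Conversely, given a representation as in $(3)$ with $F$ extended by zero, the right-hand side is exactly $\mathcal F_{A_p}^{-1}(\chi_{[0,\sqrt{\Omega}]}F)$, so applying $\mathcal F_{A_p}$ and using injectivity gives $\mathcal F_{A_p}f=\chi_{[0,\sqrt{\Omega}]}F$, whose support lies in $[0,\sqrt{\Omega}]$. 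The concluding ``more explicit'' formula would then follow by substituting the diagonal matrix form of $d\mu$ and writing $F=(F_+,F_-)$ and $\Phi=(\tilde\phi_+,\tilde\phi_-)$ componentwise: the diagonal entries $\tfrac{1}{4\pi p_n q_0}$ and $\tfrac{1}{4\pi p_n q_n}$ of $\mu$ produce precisely the two displayed summands.

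The only genuinely delicate points, which I would treat with care rather than the formal manipulations above, are two. First, the faithfulness of the functional calculus for the \emph{unbounded} self-adjoint operator $A_p$ under $\mathcal F_{A_p}$, which rests on the full spectral theorem for $A_p$ supplied by the preceding result; this is standard once the spectral transform is available. Second, the matrix bookkeeping in passing from the abstract integral (\ref{eq:PWexp}) to the two-component formula, where the dot product $F(\lambda)\cdot\Phi(\lambda,x)$ must be read correctly against the $2\times2$ matrix measure $\mu$. I expect this second point to be the main source of computational friction, while the first is routine.
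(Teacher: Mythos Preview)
The paper does not prove this statement at all; it is quoted verbatim as Proposition~3.2 of \cite{GK2017} and used as a black box. Your argument is the standard one---transport the spectral projection through the unitary $\mathcal F_{A_p}$ so that it becomes multiplication by $\chi_{[0,\sqrt{\Omega}]}$, then read off the equivalences---and it is correct. This is exactly the kind of routine functional-calculus computation one would expect behind such a statement, and there is no meaningful comparison to be made with the present paper's treatment.
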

\vspace*{-2mm}
Using the explicit expressions given above, this can be written as
\begin{equation*}
f(x)=
\int_0^{\Omega}\bigl(G_-(\zeta)\tilde\phi_-(\zeta,x)
+G_+(\zeta)\tilde\phi_+(\zeta,x)\bigr)\,\mbox{d}\zeta
\end{equation*}
after a change of variable $\zeta=\sqrt{\lambda}$
and change of unknown function, $G_-(\zeta)=\dfrac{F_-(\zeta^2)}{2\pi p_nq_n|b_n^-(\zeta)|^2}$, and a similar expression for $G_+$ so that $G_\pm\in L^2(0,\Omega)$.
Using \eqref{eq:expphi}, this can further be written as
$$
f(x)=\int_{-\Omega}^{\Omega}G_j(\zeta)e^{-iq_j\zeta x}\,\mbox{d}\zeta,\quad x\in I_j
$$
where
$$
G_j(\zeta)=\begin{cases}
b_j^-(\zeta)G_-(\zeta)+b_j^+(\zeta)G_+(\zeta),
&\mbox{for }\zeta>0\\
a_j^-(-\zeta)G_-(-\zeta)+a_j^+(-\zeta)G_+(-\zeta),
&\mbox{for }\zeta<0\end{cases}.
$$
As $G_\pm\in L^2(0,\Omega)$ and $a_j^\pm,b_j^\pm$ are bounded, $G_j\in L^2(-\Omega,\Omega)$.
We thus obtain the following, which was already partially  in
\cite[Proposition 3.3(i), Proposition 3.5]{GK2017}, 

\begin{corollary}
Let $p\in PC_N$, $A_p$ be the self-adjoint realization of $\tau_p$ and $\Omega>0$.
Then $PW_{\Omega}(A_p)$ is a closed subspace of $L^2(\R)$ consisting of continuous functions.
Moreover, on each $I_j$, $f$ is the restriction to $I_j$ of a function
in $PW_{\Omega q_j}(\R)$.
\end{corollary}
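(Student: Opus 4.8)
The plan is to read off all three assertions from the explicit representation established just above the statement, namely
$$
f(x)=\int_{-\Omega}^{\Omega}G_j(\zeta)e^{-iq_j\zeta x}\,\d\zeta,\qquad x\in I_j,\quad G_j\in L^2(-\Omega,\Omega),
$$
together with the fact that $PW_\Omega(A_p)$ is by definition the range of the spectral projection $\chi_{[0,\sqrt{\Omega}]}(A_p)$. First, closedness is immediate and requires no computation: since $A_p$ is self-adjoint, $\chi_{[0,\sqrt{\Omega}]}(A_p)$ is an orthogonal projection on $L^2(\R)$, and the range of any orthogonal projection is a closed subspace.

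Next I would treat the restriction statement, which is the cleanest part. Fix $j$ and perform the change of variable $\eta=q_j\zeta$ in the integral above. Setting $H_j(\eta)=q_j^{-1}G_j(\eta/q_j)$ one obtains
$$
f(x)=\int_{-\Omega q_j}^{\Omega q_j}H_j(\eta)e^{-i\eta x}\,\d\eta,\qquad x\in I_j.
$$
Because $G_j\in L^2(-\Omega,\Omega)$, the function $H_j$ lies in $L^2(-\Omega q_j,\Omega q_j)$, hence in $L^1\cap L^2$. Defining $\tilde f$ on all of $\R$ by the same integral, Plancherel gives $\tilde f\in L^2(\R)$, and up to the normalizing factor $\sqrt{2\pi}$ and a reflection in the Fourier convention, $\tilde f$ is the inverse Fourier transform of a function supported in $[-\Omega q_j,\Omega q_j]$. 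Thus $\tilde f\in PW_{\Omega q_j}(\R)$ and $f=\tilde f$ on $I_j$, which is exactly the claimed restriction property.

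Finally, continuity. On each open interval $I_j$ the previous paragraph identifies $f$ with an element of $PW_{\Omega q_j}(\R)$, which is entire (hence analytic and certainly continuous) when restricted to $\R$, so $f$ is smooth on each $I_j$. It remains to handle the finitely many jump points $x_1,\dots,x_N$. For this I would invoke the construction of the fundamental system $\Phi(\lambda,\cdot)$: the transfer matrices $L_j(\zeta)$ are precisely the matrices that glue the branches across $x_j$ so that both $\phi_\pm(\lambda,\cdot)$ and $p\,\partial_x\phi_\pm(\lambda,\cdot)$ are continuous at each $x_j$. Consequently every $\Phi(\lambda,\cdot)$ is continuous on all of $\R$, and integrating against $F\in L^2([0,\sqrt{\Omega}],\d\mu)$ in \eqref{eq:PWexp} preserves continuity by dominated convergence, using the boundedness of the $a_j^\pm,b_j^\pm$. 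Equivalently, one may simply note that the range of $\chi_{[0,\sqrt{\Omega}]}(A_p)$ is contained in $\mathcal D(A_p)\subset AC_{loc}(\R)$, so every $f\in PW_\Omega(A_p)$ is automatically continuous on $\R$.

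The one step requiring genuine care is continuity at the jump points: on either side of $x_j$ the function is merely the restriction of a band-limited function with a \emph{different} bandwidth $\Omega q_j$, and a priori there is no reason the two one-sided limits should agree. The matching is supplied entirely by the transmission conditions encoded in $L_j(\zeta)$ (equivalently, by membership in $\mathcal D(A_p)$), and this is the point I would state explicitly rather than leave implicit.
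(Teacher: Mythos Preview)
Your proposal is correct and follows essentially the same approach as the paper: the Corollary is stated there as a direct consequence of the formula $f(x)=\int_{-\Omega}^{\Omega}G_j(\zeta)e^{-iq_j\zeta x}\,\d\zeta$ with $G_j\in L^2(-\Omega,\Omega)$, and you simply make the change of variable explicit and supply the (routine) details on closedness and continuity that the paper defers to \cite{GK2017}. Your remark that continuity at the jump points is precisely what the transfer matrices $L_j(\zeta)$ (equivalently membership in $\mathcal D(A_p)$) encode is correct and worth keeping.
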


The main difference is that in \cite{GK2017}, the authors prove that
$f$ is locally in the larger Bernstein space. On the other hand, their result is valid for more general $p$. This corollary also leads to the following question\,:

\begin{question}
Let $p\in PC_N$ and, for each $j$, let $f_j$ be the restriction to $I_j$ of 
a function in $PW_{\Omega q_j}(\R)$. Under which condition is the function $\displaystyle f=\sum_{j=0}^Nf_j\mathbbm{1}_{I_j}\in PW_{\Omega}(A_p)$?
\end{question}

Of course, $f$ has to be continuous, but this is not enough. So far, we have only been able
to answer this question in the single jump case. This will be done in the next section
and also leads to another proof of the sign retrieval question in this case. It is likely
that this would also be a crucial step for phase retrieval in variable band width spaces.

This raises also a second question\,:

\begin{question}
Let $p,\tilde p\in PC_N$ be such that $p_1\geq p_2$. Is $PW_{\Omega}(A_p)\subset PW_{\Omega}(A_{\tilde p})$?
\end{question}

From the formula, this is clear when $\tilde p=\alpha p$, but not in the general case.

\smallskip

Our computation also shows that $PW_\Omega(A_p)$ is obtained as follows\,:
take a pair of function $G_-,G_+\in L^2(0,\Omega)$. For each $\zeta$,
and each $j$, construct $G_j$ with the above formula. Then on $I_j$, take the (properly scaled) Fourier transform of $G_j$. Now this can be reverted.
If $f\in PW_\Omega(A_p)$ is given, then its values on any $I_j$
uniquely determine $G_+,G_-$ as follows\,: As $f$ is the restriction of a function $f_j\in PW_{c_j}$,
we may (in theory) obtain $f_j$ on $\R$ from its restriction to $I_j$.
Consider its Fourier transform $\widehat{f_j}$. To obtain $G_-,G_+$, it remains to solve the system
$$
\left\{\begin{matrix}
b_j^-(\zeta)G_-(\zeta)&+&b_j^+(\zeta)G_+(\zeta)&=&\widehat{f_j}(\zeta)\\
a_j^-(\zeta)G_-(\zeta)&+&a_j^+(\zeta)G_+(\zeta)&=&\widehat{f_j}(-\zeta)
\end{matrix}\right.
$$
for $\zeta>0$. This system has non-zero determinant, so its solution is unique.

We are now ready to prove the main result\,:

\begin{theorem}
Let $p\in PC_N$, $A_p$ be the self-adjoint realization of $\tau_p$ and $\Omega>0$.
Let $f,g\in PW_\Omega(A_p)$ be real valued and such that $|f|=|g|$.
Then $f=g$ or $f=-g$.
\end{theorem}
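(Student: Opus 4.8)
The plan is to localise the problem to each interval $I_j$, apply the Paley--Wiener sign retrieval lemma there, and then show that the local signs cannot disagree across a jump. First I would dispose of the trivial case $g\equiv 0$ (which forces $f\equiv 0$) and assume $g\not\equiv 0$. On each $I_j$ the preceding corollary says that $f$ and $g$ are restrictions of functions $f_j,g_j\in PW_{\Omega q_j}(\R)$, and $|f|=|g|$ gives $|f_j|=|g_j|$ on $I_j$; so Lemma~\ref{lem:srpw} yields a sign $\eps_j\in\{-1,+1\}$ with $f=\eps_j g$ on $I_j$, at least as soon as $g_j\not\equiv 0$. The whole difficulty is then to prove that $\eps_0=\eps_1=\cdots=\eps_N$, after which $f=\eps g$ on $\bigcup_j I_j$ and, by continuity of $f$ and $g$, on all of $\R$.

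The key ingredient I would isolate is a unique continuation statement: \emph{if $h\in PW_\Omega(A_p)$ vanishes on one of the intervals $I_j$, then $h\equiv 0$.} This is exactly what the transfer computation preceding the theorem provides. Indeed, if $h|_{I_j}=0$ then its entire extension $h_j$ is identically zero, so $\widehat{h_j}\equiv 0$, and the linear system
$$
\left\{\begin{matrix}
b_j^-(\zeta)G_-(\zeta)&+&b_j^+(\zeta)G_+(\zeta)&=&\widehat{h_j}(\zeta)\\
a_j^-(\zeta)G_-(\zeta)&+&a_j^+(\zeta)G_+(\zeta)&=&\widehat{h_j}(-\zeta)
\end{matrix}\right.
$$
has right-hand side $0$; since its determinant coincides, up to reordering of rows and columns, with the nonvanishing determinant \eqref{eq:det}, we get $G_-=G_+=0$ and hence $h\equiv 0$. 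As an immediate consequence, $g\not\equiv 0$ forces $g_j\not\equiv 0$ for every $j$, so all the signs $\eps_j$ are genuinely defined.

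With this in hand the sign-consistency step is short. Suppose $\eps_{j-1}\neq\eps_j$ for some adjacent pair, and set $u:=f-\eps_{j-1}g$. Since $PW_\Omega(A_p)$ is a (real-)linear subspace, $u\in PW_\Omega(A_p)$, and $u=0$ on $I_{j-1}$ by definition of $\eps_{j-1}$; the unique continuation statement then forces $u\equiv 0$, that is $f=\eps_{j-1}g$ everywhere. Restricting to $I_j$ and using $f=\eps_j g=-\eps_{j-1}g$ there gives $g=0$ on $I_j$, contradicting $g_j\not\equiv 0$. Hence all $\eps_j$ coincide with a common $\eps$, and the proof closes as indicated above.

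I expect the genuine obstacle to be precisely the transition across a jump point $x_j$ at which $f$ and $g$ both vanish: there, pointwise continuity of $f$ and $g$ alone does not forbid the sign from flipping, and a purely local matching of $f$ and $pf'$ only yields that $x_j$ is a multiple zero of $g$. The way around it is to avoid local matching altogether and instead exploit the global rigidity of $PW_\Omega(A_p)$ encoded in \eqref{eq:det}, namely that a function in this space is completely determined by its restriction to any single $I_j$. An alternative, should one prefer to argue by transmission conditions, is to note that $PW_\Omega(A_p)$ is $A_p$-invariant, so that $A_p^n u$ is again continuous with $p(A_p^n u)'$ continuous for every $n$; this produces matching conditions for all derivatives of $u$ at $x_j$ and leads to the same conclusion, but the determinant argument is the cleaner of the two.
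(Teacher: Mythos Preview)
Your proof is correct and follows essentially the same route as the paper: both localise via Lemma~\ref{lem:srpw} to obtain signs $\eps_j$ on each $I_j$, and both use the invertibility of the system built from $a_j^\pm,b_j^\pm$ (i.e.\ \eqref{eq:det}) to rule out conflicting signs. You repackage this invertibility as a unique continuation lemma applied to $u=f-\eps_{j-1}g$, whereas the paper argues directly that $(G_-^g,G_+^g)=\eps_j(G_-^f,G_+^f)$ for every $j$ and compares; the underlying mechanism is identical.
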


\begin{proof}
We first consider $|f|=|g|$ on each of the intervals $I_j$. As $f$ (resp. $g$)
is the restriction of a function $f_j$ (resp. $g_j$) in $PW_{c_j}(\R)$,
according to Lemma \ref{lem:srpw}, there exists $\eps_j\in\{\pm 1\}$
such that $g_j=\eps_j f_j$ on $I_j$. 
Next, as just explained,
$f$ (resp. $g$) stems from a pair of functions $G_\pm^f$ (resp. $G_\pm^g$) and they are 
determined by the system
$$
\left\{\begin{matrix}
b_j^-(\zeta)G_-^f(\zeta)&+&b_j^+(\zeta)G_+^f(\zeta)&=&\widehat{f_j}(\zeta)\\
a_j^-(\zeta)G_-^f(\zeta)&+&a_j^+(\zeta)G_+^f(\zeta)&=&\widehat{f_j}(-\zeta)
\end{matrix}\right.
$$
and
$$
\left\{\begin{matrix}
b_j^-(\zeta)G_-^g(\zeta)&+&b_j^+(\zeta)G_+^g(\zeta)&=&\eps_j\widehat{f_j}(\zeta)\\
a_j^-(\zeta)G_-^g(\zeta)&+&a_j^+(\zeta)G_+^g(\zeta)&=&\eps_j\widehat{f_j}(-\zeta)
\end{matrix}\right..
$$
But these systems are invertible, so that $(G_-^g,G_+^g)=\eps_j(G_-^f,G_+^f)$.
There are three cases

-- either $\eps_j=1$ for all $j$ and then $(G_-^g,G_+^g)=(G_-^f,G_+^f)$
and finally $g=f$;

-- or $\eps_j=-1$ for all $j$ and then $g=-f$;

-- or there are $k,\ell$ such that $\eps_k=1$ and $\eps_\ell=-1$. But then
we simultaneously have $(G_-^g,G_+^g)=(G_-^f,G_+^f)$ and $(G_-^g,G_+^g)=-(G_-^f,G_+^f)$
so that $G_\pm^g=G_\pm^f=0$ and finally $g=f=0$.
\end{proof}

\section{A second proof for the toy Example} 
The toy example is simple case of piecewise constant functions with a single jump,
which one can put at $0$\,:
$$
p(x)=
\begin{cases}
	p_-,&x\leq 0\\
	p_+&x>0
\end{cases}
$$
with $p_-,p_+>0$. 

In this case, we will give a second proof of the main result based on the fact
that the space $PW_{\Omega}(A_p)$ is a reproducing kernel Hilbert space \cite[Proposition 3.3(ii)]{GK2017} with kernel
$$
k_\Omega(x,y)=\int_0^\Omega\overline{\Phi(\lambda,x)}\Phi(\lambda,y)\,\mbox{d}\mu(\lambda).
$$
This kernel is also the kernel of the projection from $L^2(\R)$ onto $PW_{\Omega}(A_p)$.
The main issue is that this kernel is far from being explicit excepted in the toy model.
Even in the case of two jumps, the reproducing kernel is essentially untractable,
as can be seen from the one full-page formula in \cite{CGK}.

To come back to the toy model, the reproducing kernel for the space $PW_{[0,\Omega]}(A_p)$ is given by
\begin{enumerate}
\renewcommand{\theenumi}{\roman{enumi}}
\item when $x,y\leq 0$
\begin{multline*}
k(x,y)=\dfrac{c_-}{\pi}\operatorname{sinc}(c_-(x-y))\\
 -(\rho_+-\rho_-)\dfrac{c_-}{\pi}\operatorname{sinc}(c_-(x+y));
 \end{multline*}
\item when $x\leq 0,~y\geq 0$
$$
k(x,y)=\dfrac{2c_+\rho_+}{\pi}\operatorname{sinc}(c_-x-c_+y);
$$
\item when $x,y>0$,
\begin{multline*}
k(x,y)=\dfrac{c_+}{\pi}\operatorname{sinc}(c_+(x-y))\\
 +(\rho_+-\rho_-)\dfrac{c_+}{\pi}\operatorname{sinc}(c_+(x+y));
 \end{multline*}
\item when $x>0,~y\leq 0$
$$
k(x,y)=\dfrac{2c_-\rho_-}{\pi}\operatorname{sinc}(c_+x-c_-y)
$$
\end{enumerate}
where 
$$
	c_{\pm}=\sqrt{\dfrac{\Omega}{p_{\pm}}}~\text{and}~\rho_{\pm}=\dfrac{\sqrt{p_{\pm}}}{\sqrt{p_+}+\sqrt{p_-}}.
$$
Note that 
if $p_+=p_-:=p$ and $c=\sqrt{\dfrac{\Omega}{p}}$ then 
$k(x,y)=\dfrac{c}{\pi}\operatorname{sinc}\bigl(c(x-y)\bigr)$ and $PW_{\Omega}(A_p)=PW_{c}(\R)$.

From these formulas, it is possible to relate $f\in PW_{[0,\Omega]}(A_p)$
to the usual Paley-Wiener spaces. To do so, recall that the orthogonal projection of a function $f\in L^2(\R)$ onto $PW_c(\R)$ is given by
$$
\Pi_c f(x)=\dfrac{c}{\pi}\int_\R \operatorname{sinc}(c(x-y))f(y)\,dy,~x\in\R.
$$
For $a>0$, let $\delta_a$ be the  dilation of a function $f\in L^2(\R)$ given by $\delta_a f(x)= \sqrt{a}f(ax)$ so that $\|\delta_af\|_2=\|f\|_2$. It is easy to see that $\widehat{\delta_a f}=\delta_{1/a}\widehat f$, thus $f\in PW_c(\R)$ if and only if $\delta_af\in PW_{ac}(\R)$,
and that $\delta_a\Pi_c[\delta_{1/a}f]=\Pi_{ac}f$.

Now, for $f\in PW_{[0,\Omega]}(A_p)$, write $f_\pm=f\mathbbm{1}_{\R^\pm}$.
Then, for $x\in\R^-$, 
\begin{align*}
f(x)&=\dfrac{1}{\pi}\int_{-\infty}^0 c_-\operatorname{sinc}\left(c_-(x-y)\right)f_-(y)\,\mbox{d}y\\ 
&\quad-\dfrac{\rho_+-\rho_-}{\pi}\int_0^{\infty}c_-\operatorname{sinc}(c_-(x-y))f_-(-y)\,\mbox{d}y\\
&\qquad-\dfrac{2\rho_+}{\pi}\int_0^{\infty}c_+\operatorname{sinc}\left(c_+\left(\frac{c_-}{c_+}x-y\right)\right)f_+(y)\,\mbox{d}y\\
&=\Pi_{c_-}\left[f_-\cdot \mathbbm 1_{\R_-}
-(\rho_+-\rho_-)\check f_-\cdot \mathbbm 1_{\R_+}\right](x)\\
&\quad+2\rho_+\sqrt{\dfrac{c_+}{c_-}}\delta_{c_-/c_+}\Pi_{c_+}[f_+\cdot \mathbbm 1_{\R_+}](x),
\end{align*}
where $\check \varphi(x)=\varphi(-x)$. Using the scaling property, 
we obtain
\begin{align}
f_-&=\mathbbm{1}_{\R_-}\Pi_{c_-}\Bigl[f_-\cdot \mathbbm 1_{\R_-}-(\rho_+-\rho_-)\check f_-\cdot \mathbbm 1_{\R_+}\notag\\
&\quad+2\rho_+\sqrt{\dfrac{c_+}{c_-}}(\delta_{c_-/c_+}f_+))\cdot \mathbbm 1_{\R_+}\Bigr].
\label{eq:f-}
\end{align}
Similarly, the positive part is given by
\begin{align}
f_+&=\mathbbm{1}_{\R_+}\Pi_{c_+}\Bigl[f_+\cdot \mathbbm 1_{\R_+}+(\rho_+-\rho_-)\check f_+\cdot \mathbbm 1_{\R_-}\notag\\
&\quad+2\rho_-\sqrt{\dfrac{c_-}{c_+}}(\delta_{c_+/c_-}f_-))\cdot \mathbbm 1_{\R_-}\Bigr].
\label{eq:f+}
\end{align}

\begin{proposition}[Sign Retrieval]
Assume that $p\in PC_1$.
Let $f,g\in PW_{[0,\Omega]}(A_p)$ be \emph{real-valued} and such that $|f|=|g|$ on $\R$, then
$f=g$ of $f=-g$.
\end{proposition}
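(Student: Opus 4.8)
The plan is to mirror the structure of the first proof of the main result, but to replace the appeal to the spectral coefficients $G_\pm$ by the two structural identities \eqref{eq:f-} and \eqref{eq:f+} coming from the reproducing kernel. First I would restrict attention to the two half-lines: by the Corollary, $f|_{\R_-}$ and $g|_{\R_-}$ are restrictions of functions in $PW_{c_-}(\R)$ (with $c_-=\sqrt{\Omega/p_-}$), and likewise $f|_{\R_+},g|_{\R_+}$ are restrictions of functions in $PW_{c_+}(\R)$. Since $|f|=|g|$ on each of the intervals $\R_-$ and $\R_+$, Lemma~\ref{lem:srpw} furnishes signs $\eps_-,\eps_+\in\{\pm1\}$ with $g_-=\eps_- f_-$ and $g_+=\eps_+ f_+$, where $f_\pm=f\mathbbm{1}_{\R_\pm}$ and similarly for $g$. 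If $\eps_-=\eps_+$ we are done at once, so the entire content is the case $\eps_-\neq\eps_+$, in which I will show $f\equiv0$ (hence $g\equiv0$), which still falls under the conclusion $f=g$.

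Next I would feed the relations $g_\pm=\eps_\pm f_\pm$ into \eqref{eq:f-} and \eqref{eq:f+}, which $g$ must satisfy because $g\in PW_{[0,\Omega]}(A_p)$. In the identity for $g_-$, the terms built from $g_-$ and $\check g_-$ carry the factor $\eps_-$, while the cross term built from $\delta_{c_-/c_+}g_+$ carries $\eps_+$. Subtracting $\eps_-$ times \eqref{eq:f-} from the identity for $g_-$ kills the diagonal terms and leaves
\begin{equation*}
2(\eps_+-\eps_-)\rho_+\sqrt{\tfrac{c_+}{c_-}}\,\mathbbm{1}_{\R_-}\Pi_{c_-}\bigl[(\delta_{c_-/c_+}f_+)\mathbbm{1}_{\R_+}\bigr]=0,
\end{equation*}
and, symmetrically from \eqref{eq:f+},
\begin{equation*}
2(\eps_--\eps_+)\rho_-\sqrt{\tfrac{c_-}{c_+}}\,\mathbbm{1}_{\R_+}\Pi_{c_+}\bigl[(\delta_{c_+/c_-}f_-)\mathbbm{1}_{\R_-}\bigr]=0.
\end{equation*}
Since $\eps_+\neq\eps_-$ and all scalar prefactors ($\rho_\pm,c_\pm$) are nonzero, these reduce to $\mathbbm{1}_{\R_-}\Pi_{c_-}\bigl[(\delta_{c_-/c_+}f_+)\mathbbm{1}_{\R_+}\bigr]=0$ and $\mathbbm{1}_{\R_+}\Pi_{c_+}\bigl[(\delta_{c_+/c_-}f_-)\mathbbm{1}_{\R_-}\bigr]=0$.

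Then I would use analyticity to upgrade these half-line vanishing statements to full vanishing. Writing $f_+=F_+\mathbbm{1}_{\R_+}$ with $F_+\in PW_{c_+}(\R)$, the scaling identity $\widehat{\delta_af}=\delta_{1/a}\widehat f$ gives $\delta_{c_-/c_+}f_+=w\,\mathbbm{1}_{\R_+}$ with $w:=\delta_{c_-/c_+}F_+\in PW_{c_-}(\R)$. Now $\Pi_{c_-}\bigl[w\mathbbm{1}_{\R_+}\bigr]$ belongs to $PW_{c_-}(\R)$ and hence extends to an entire function; the first identity says it vanishes on the interval $\R_-$, so by the accumulation-point argument of Lemma~\ref{lem:srpw} it vanishes identically. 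Since $\Pi_{c_-}$ is the orthogonal projection onto $PW_{c_-}(\R)$, this means $w\mathbbm{1}_{\R_+}\perp PW_{c_-}(\R)$, and pairing with $w$ itself yields $0=\langle w\mathbbm{1}_{\R_+},w\rangle=\int_0^\infty|w|^2$, so $w=0$ on $\R_+$ and therefore $f_+=0$. The symmetric computation with $\Pi_{c_+}$ gives $f_-=0$. Hence $f\equiv0$, so $g=0$ as well and $f=g$, completing the case $\eps_-\neq\eps_+$.

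The step I expect to be the main obstacle is the bookkeeping in the second paragraph: one must check that every diagonal contribution in \eqref{eq:f-}--\eqref{eq:f+} carries exactly the sign of the half-line it lives on (including the reflected term $\check f_\pm$), so that the subtraction isolates a single cross term with nonvanishing coefficient. Once that is verified, the analyticity-plus-orthogonality argument of the third paragraph is the decisive conceptual point, and it is precisely there that the rigidity of band-limitation—an entire function of exponential type vanishing on a half-line must vanish everywhere—does the essential work.
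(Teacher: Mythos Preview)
Your proof is correct and follows the same overall architecture as the paper's second proof: reduce to signs $\eps_\pm$ via Lemma~\ref{lem:srpw}, feed the sign relations into \eqref{eq:f-}--\eqref{eq:f+}, cancel the diagonal terms, and use analyticity of $\Pi_c[\cdot]$ to pass from half-line vanishing to global vanishing. The bookkeeping you flag as the main obstacle is fine; the reflected terms $\check f_\pm$ indeed carry the same sign $\eps_\mp$ as the unreflected ones, so the subtraction isolates exactly the cross term.

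The genuine difference is in the endgame. Once the paper has $\Pi_{c_-}\bigl[(\delta_{c_-/c_+}f_+)\mathbbm{1}_{\R_+}\bigr]=0$, it rewrites this via the scaling identity as $\Pi_{c_+}[f_+\mathbbm{1}_{\R_+}]=0$, reads this as saying that $\varphi:=f_+\mathbbm{1}_{\R_+}$ has Fourier transform supported in $\R\setminus[-c_+,c_+]$, and then invokes an uncertainty principle from \cite[p.~36]{HJ} (a half-line supported function whose spectrum misses an interval must vanish). You instead observe that $(\delta_{c_-/c_+}f_+)\mathbbm{1}_{\R_+}=w\mathbbm{1}_{\R_+}$ with $w\in PW_{c_-}(\R)$, so the orthogonality $w\mathbbm{1}_{\R_+}\perp PW_{c_-}(\R)$ can be tested against $w$ itself to give $\int_0^\infty|w|^2=0$. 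This is more self-contained: it uses only the Corollary and the Hilbert-space definition of $\Pi_c$, with no external uncertainty-principle reference. A second, minor difference is that you run the two half-lines in parallel from the start, whereas the paper first obtains $f_+=0$ and then feeds that into \eqref{eq:f+} to extract $f_-=0$.
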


\begin{proof}
%

Now, let $f,g\in PW_{[0,\Omega]}(A_p)$ be \emph{real valued} and such that $|f|=|g|$ on $\R$,
Write $f_\pm=f\mathbbm 1_{\R_\pm}$ and $g_\pm=g\mathbbm 1_{\R_\pm}$. Each of $f_\pm$ or $g_\pm$ extends
holomorphically to $\C$ so that $|f_\pm|=|g_\pm|$ implies that there are $\eps_-,\eps_+\in\{-1,1\}$
such that $f_\pm=\eps_\pm g_\pm$. We want to show that $\eps_-=\eps_+$.
Assume this is not the case. Without loss of generality, we may assume that $\eps_-=1$ and $\eps_+=-1$
that is $f_-=g_-$ and $f_+=-g_+$. Let us plug this into \eqref{eq:f-} to obtain
\begin{align*}
f_-&=\mathbbm{1}_{\R_-}\Pi_{c_-}\Bigl[f_-\cdot \mathbbm 1_{\R_-}-(\rho_+-\rho_-)\check f_-\cdot \mathbbm 1_{\R_+}\\
&\quad+2\rho_+\sqrt{\dfrac{c_+}{c_-}}(\delta_{c_-/c_+}f_+)\cdot \mathbbm 1_{\R_+}\Bigr]\\
=\,\,g_-&=\mathbbm{1}_{\R_-}\Pi_{c_-}\Bigl[f_-\cdot \mathbbm 1_{\R_-}-(\rho_+-\rho_-)\check f_-\cdot \mathbbm 1_{\R_+}\\
&\quad-2\rho_+\sqrt{\dfrac{c_+}{c_-}}(\delta_{c_-/c_+}f_+)\cdot \mathbbm 1_{\R_+}\Bigr].
\end{align*}
Comparing both expressions of $f_-$ we obtain that
$$
\mathbbm{1}_{\R_-}\Pi_{c_-}\left[(\delta_{c_-/c_+}f_+)\cdot \mathbbm 1_{\R_+}\right]=0.
$$
As $\Pi_{c_-}[\ffi]$ is holomorphic, if it vanishes on $\R_-$, it is zero everywhere thus
$$
\delta_{c_-/c_+}\Pi_{c_+}[f_+\cdot \mathbbm 1_{\R_+}]=\Pi_{c_-}\left[(\delta_{c_-/c_+}f_+)\cdot \mathbbm 1_{\R_+}\right]=0
$$
that is $\Pi_{c_+}[f_+\cdot \mathbbm 1_{\R_+}]=0$. 
This means that $\ffi:=f_+\cdot \mathbbm 1_{\R_+}$ has Fourier transform supported in $\R\setminus[-c_+,c_+]$. A version of the uncertainty principle ({\it see} \cite[p36]{HJ}) shows that this
can only happen if $\ffi=0$. 
Now that we now that $f_+=0$ thus $g_+=0$, it follows that
$$
f_-=g_-=\mathbbm{1}_{\R_-}\Pi_{c_-}\left[f_-\cdot \mathbbm 1_{\R_-}-(\rho_+-\rho_-)\check f_-\cdot \mathbbm 1_{\R_+}\right].
$$

Further, from the expression of $f_+$, we obtain that
\begin{eqnarray*}
0&=&2\rho_-\sqrt{\dfrac{c_-}{c_+}}\mathbbm{1}_{\R_+}\Pi_{c_+}\left[(\delta_{c_+/c_-}f_-)\cdot \mathbbm 1_{\R_-}\right]\\
&=&\rho_-\sqrt{\dfrac{c_-}{c_+}}\mathbbm{1}_{\R_+}\Pi_{c_-}\left[f_-\mathbbm 1_{\R_-}\right]
\end{eqnarray*}
that is, $\Pi_{c_-}\left[f_-\mathbbm 1_{\R_-}\right]$ vanishes on $\R^+$ which, as for $f_+$ implies that
$f_-\mathbbm 1_{\R_-}=0$ so that, finally, $f_+=f_-=0$ and thus $f=g=0$.
\end{proof}


\section{Some future directions of research}

As we mentioned in the introduction, it is natural to ask whether sign retrieval is possible
from sampled measurements. This is possible in the classical Paley-Wiener space \cite{Ta},
but also in certain shift-invariant spaces \cite{Gr,Ro}. For variable band-width spaces,
sampling theorems are one of the key questions addressed in \cite{Gr,CGK}. At this stage, we have unfortunately 
been unable to give an answer to the following question:

\begin{question} 
Fix $p\in PC_N$. Under which conditions on a discrete set $\Lambda\subset\R$
is it true that $f,g\in PW(A_p)$ real valued with $|f(\lambda)|=|g(\lambda)|$ for every $\lambda\in\Lambda$
implies that $g=\pm f$?
\end{question}

Of course, one may replace the condition $|f(\lambda)|=|g(\lambda)|$ with $f(\lambda)^2=g(\lambda)^2$.
The difficulty of this question is that $f^2,g^2$ do not seem to belong to a variable bandwidth space
(there is no convolution theorem here). It is thus difficult to establish a sampling
theorem for those functions.

\smallskip

One conclusion of our research is that the variable band-width spaces introduced by Gr\"ochenig and Klotz
provide a deep theoretical family of spaces but with the drawback that practical questions in those spaces are hard 
to tackle. Recently Andreolli and Gr\"ochenig \cite{AG} introduced a new family of variable band-width spaces based on Wilson 
bases. Those spaces are easier to handle as their elements have a rather explicit description. Nevertheless the question of sign-retrieval does not seem simple in them and calls for further research.


\begin{thebibliography}{10}

\bibitem{AG}
{\sc B. Andreolli \& K. Gr\"ochenig}
{\em Variable bandwidth via Wilson bases},
Appl. Comput. Harm. Anal. {\bf 71} (2024), 101641.

\bibitem{CGK}
{\sc M.\,J. Celiz, K.~Gr\"ochenig and A.~Klotz}
{\em Spectral subspaces of Sturm-Liouville operators and variable bandwidth},
J. Math. Anal. Appl. {\bf 535} (2024), 128225.

\bibitem{GK2017}
{\sc K.~Gr\"ochenig and A.~Klotz}
{\em What is variable bandwidth?}, 
Comm. Pure Appl. Math., {\bf 70} (2017), 2039--2083.

\bibitem{Gr}
{\sc K.~Gr\"ochenig}
{\em Phase-Retrieval in Shift-Invariant Spaces with Gaussian Generator},
J Fourier Anal Appl {\bf 26} (2020), 52

\bibitem{GLR}
{\sc P. Grohs, L. Liehr \& M. Rathmair}
{\em Phase retrieval in Fock space and perturbation of Liouville sets},
arXiv:2308.00385.

\bibitem{HJ}
\textsc{V. Havin \& B. J\"oricke}
\newblock{\em The uncertainty principle in harmonic analysis},
Springer-Verlag, Berlin, (1994).

\bibitem{Ho}
\textsc{K. Horiuchi}
\newblock{\em Sampling principle for continuous signals with time-varying bands},
Information and Control {\bf 13} (1968), 53-–61.

\bibitem{Ro}
\textsc{J.\,L. Romero}
\newblock{\em Sign Retrieval in Shift-Invariant Spaces with Totally Positive Generator},
J Fourier Anal Appl {\bf 27} (2021), 27.

\bibitem{Ta}
\textsc{G. Thakur}
\newblock{\em Reconstruction of bandlimited functions from unsigned samples}
J. Fourier Anal. Appl. {\bf 17} (2011), 720-–732.
\end{thebibliography}
\end{document}